\newtheorem{definition}{\bf Definition}[section]
\newtheorem{theorem}[definition]{\bf Theorem}
\newtheorem{remark}[definition]{\bf Remark}
\begin{document}

\begin{frontmatter}

%% Title, authors and addresses

\title{On set-convergence statistically modulated}

%% use the tnoteref command within \title for footnotes;
%% use the tnotetext command for the associated footnote;
%% use the fnref command within \author or \address for footnotes;
%% use the fntext command for the associated footnote;
%% use the corref command within \author for corresponding author footnotes;
%% use the cortext command for the associated footnote;
%% use the ead command for the email address,
%% and the form \ead[url] for the home page:
%%
%% \title{Title\tnoteref{label1}}
%% \tnotetext[label1]{}
%% \author{Name\corref{cor1}\fnref{label2}}
%% \ead{email address}
%% \ead[url]{home page}
%% \fntext[label2]{}
%% \cortext[cor1]{}
%% \address{Address\fnref{label3}}
%% \fntext[label3]{}

%% use optional labels to link authors explicitly to addresses:
%% \author[label1,label2]{<author name>}
%% \address[label1]{<address>}
%% \address[label2]{<address>}
%\author{Fernando Le\'on-Saavedra}
%\address{ Department of Mathematics
%University of C\'adiz Avda. de la Universidad s/n %11402-Jerez de
%la Frontera. Spain.} \ead{fernando.leon@uca.es}

\author{Mar\'{\i}a Pilar Romero de la Rosa}

\address{ Department of Mathematics
University of C\'adiz Avda. de la Universidad s/n 11403-Jerez de
la Frontera. Spain.}
\ead{pilar.romero@uca.es}

\begin{abstract}

We explore some convergence notions for set-convergence coming from modern summability methods. Specifically we will see the connections between Wijsman $f$-statistical convergence and Wijsman $f$-strong Cesàro convergence, when $f$ is a modulus function that induces a density in $\mathbb{N}$. 
\end{abstract}

\begin{keyword}
Set convergence, Wijsman convergence, statistical convergence
\\ MSC \sep40H05  \sep   	40A35
\end{keyword}

\end{frontmatter}

\section{Introduction}

Set-convergences show up naturally in Mathematics. A century ago, it appear for the first time with the work of Painlevé (1902), that was supported and disseminate by Hausdorff and Kuratowski in their 1927 and 1933 books, respectively (see \cite[Chapter 4]{rocka} for historical details).
In the twenty century, the study of set convergence was centered on topological aspects by considering a set as a point in a topological space. These studies were collected on Beer's monograph (see \cite{beer}).
Currently, an increasing number on applied variational problems and  the constant queries coming from the modern data science, have renewed the interest in set-convergence from an applied point of view.

Imagine that we wish to know if a sequence of closed data sets $(A_n)_n$, obtaining experimentally, converges to a given closed subset $A$. Assume that  every subset is contained in a metric space $(X,d)$, and let us denote by $CL(X)$ the family of all  closed subsets of $X$. A way to see convergence of sets is to consider for any $x\in X$ the distances $(d(x,A_n))_n$ and to check if they converges to $d(x,A)$. In this case the sequence $A_n$
is said to be {\it Wijsman convergent} to $A$. However this is a strong condition, because the data sets were obtained experimentally and probably many data were collected erroneously. One way to relax this situation is to consider weak convergence methods.

For instance,  a sequence $(x_n)_n$, in a normed space, is said to be statistically convergent  to $L$ if for any $\varepsilon>0$ the subset
$A_\varepsilon=\{k\leq n\,:\,\|x_n-L\|>\varepsilon\}$ has zero density on $\mathbb{N}$. And with this weaker convergence in hand, Nuray and Rhoades \cite{rhoades} relaxed the concept of Wijsman convergence by saying that 
 a sequence of closed subsets $(A_k)_k$ is {\it Wijsman statistically convergent} to $A$ if for any $x\in X$ the distances $(d(x,A_n))_n$ converges statistically to $d(x,A)$. They also connect this concept with the concept of {\it Wijsman strong Cesàro convergence} which is a reminiscent of the classical strong Cesàro convergence. That is, a sequence  $(A_k)_k\subset CL(X)$ converges Wijsman strong Cesàro to $A\in CL(X)$ provided for any $x\in X$ the sequence of real numbers $(d(x,A_n))_n$ converges strong Cesàro to $L=d(x,A)$, that is:
 $$
\lim_{n\to\infty}\frac{1}{n} \sum_{k=1}^n |d(x,A_k)-d(x,A)|=0
 $$
However, sometimes statistical convergence is a drastic way
to filter data. And what is needed is to filter the data in a more accurate way. We can  obtain a fine filter data 
by means of a modulus function. Indeed, a modulus function will be a function $f\,:\, \mathbb{R}^+\to \mathbb{R}^+$ satisfying 
\begin{enumerate}
    \item $f(x)=0 $ if and only if $x=0$.
    \item $f(x+y)\leq f(x)+f(y)$ for every $x,y\in \mathbb{R}^+$.
    \item $f$ is increasing.
    \item $f$ is continuous from the right at $0$.
\end{enumerate}
Attempts to obtain convergence methods
by means of modulus functions date back to the works of Maddox \cite{maddox} in the nineties. 
The study of convergences through modulo functions has a wide applicability, not only for the case of convergence of sets. For instance, it can be used by F. García-Pacheco and R. Kama for refinements of the concept of derivative. And it have been used by C. Belen adn Yildrim to see new points of view for convergence of power series (\cite{compa2,compa3,compa1}).
 
 We will suppose that the modulus functions are unbounded to avoid trivialities. Let us denote by $\#(A)$  the cardinal of a finite subset $A$.
More recently, these weak convergences are transferred to convergence of sets in \cite{wij1}.  For instance, a sequence $(A_n)n\subset CL(X)$ is said to be Wijsman $f$-statistically convergent to $A$ if for any $x\in X$ the sequence of distances
$(d(x,A_n))_n$ converges $f$-statistically to $d(x,A)$ that is, for any $\varepsilon>0$
$$
\lim_{n\to \infty}\frac{1}{f(n)}f\left(\#\{k\leq n\,:\,|d(x,A_n)-d(x,A)|>\varepsilon \}\right)=0.
$$

Moreover, in \cite{wij1} there is a great  effort initiated by  V. K. Bhardwaj and S. Dhawan, to relate the $f$-statistical Wijsman convergence with the strong Cesàro convergence modulated by a modulus function, introduced by Maddox \cite{maddox}. However, as seen in previous studies \cite{jia1,jia2,racsam}, in order to obtain such a structure  between both convergences, it is necessary to introduce a smooth change in the notion of strong $f$-Cesàro convergence introduced by Maddox.

If it is used lacunary sequences in the framework of the classic concepts of statistical convergence and strong Cesàro convergence, it is also possible to open or close the degree of precision in filtering data sets. Freeman and Sember, in their 1978 visionary paper and inspired by  the classical studies on Cesàro convergence by Hardy-Littlewood and Fekete, studied the concept of strong lacunary convergence in \cite{sember}.
These studies were refined by Fridy and Orhan \cite{b5} by relating them to a new concept: {\it lacunary statistical convergence}, where it underlies new ways of measuring the density of subsets in $\mathbb{N}$.

Ulusu and Nuray \cite{ulusu} were the first to explore these convergences within the the framework of convergence of sets, studying the Wijsman lacunary statistical convergence  and the Wijsman lacunary strong  Cesàro  convergence. 
These rich Wijsman lacunary convergences were sharpened by   V. K. Bhardwaj, S. Dhawan \cite{wij2} using modulus functions.

In this paper we will complete the studies initiated in \cite{wij1} and \cite{wij2} by V. K. Bhardwaj, S. Dhawan and coworkers by fully characterizing the connection between Wijsman $f$-statistical convergence and Wijsman $f$-strong Cesàro convergence.
We explore the modulus functions that offer set-convergence methods which are less permissive with the errors obtained experimentally. Specifically this phenomenon happens when the modulus function $f$ is not compatible. On the contrary, when the modulus functions are compatible (see Definition \ref{compatible}) then the Wijsman $f$-statistically convergence is exactly the Wijsman statistically convergence introduced by Nuray and Rhoades \cite{rhoades}. We analyze also the existing structure between  the Wijsman convergence of sets when it is induced by lacunary $f$-statistical convergences.

The paper is structured as follows. In Section \ref{dos} we will explore the connections between Wijsman $f$-statistical convergence and Wijsman $f$-strong Cesàro convergence. In Section \ref{tres}  we fully
characterize the relationship between the Wijsman $f$-statistical convergence and the Wijsman $f$-strong Cesàro convergence in the lacunary setting.

\section{Compatible modulus functions versus Wijsman $f$-statistical convergence}
\label{dos}

In \cite{jia1} the concept of compatible modulus functions was introduced. Basically, for the compatible modulus functions it was obtained in \cite{jia1} that  $f$-statistical convergence and statistical convergence are equivalent.

\begin{definition}
\label{compatible}
Let us denote by $\varphi(\varepsilon)=\limsup_n \frac{f(n\varepsilon)}{f(n)}$. A modulus function  $f$ is said to be compatible if $\lim_{\varepsilon\to 0}\varphi(\varepsilon)=0$.
\end{definition}

\begin{remark}
Examples of compatible modulus functions are $f(x)=x^p+x^q$, $0<p,q\leq 1$, $f(x)=x^p+\log(x+1)$, $f(x)=x+\frac{x}{x+1}$. And $f(x)=\log(x+1)$, $f(x)=W(x)$ (where $W$ is the $W$-Lambert function restricted to $\mathbb{R}^+$, that is, the inverse of $xe^x$) are modulus functions which are not compatible.  By taking inverses, there is a plenty number of non-compatible modulus functions. For instance, we consider the inverse of the functions $x^pe^x$, which are generalized $W$-Lambert functions.

Let us show that $f(x)=x+\log(x+1)$ is compatible. 
$$
\lim_{n\to\infty}\frac{f(n\varepsilon')}{f(n)}=\lim_{n\to \infty}\frac{n\varepsilon'+\log(1+n\varepsilon')}{n+\log(n+1)}=\varepsilon'
$$
On the other hand  if $f(x)=\log(x+1)$, since
$$
\lim_{n\to\infty}\frac{\log(1+n\varepsilon')}{\log(1+n)}=1
$$
we obtain that $f(x)=\log(x+1)$ is not compatible.

\end{remark}
To simplify the language, let us denote by $A_n\overset{WS}{\longrightarrow}A$ when $(A_n)_n$ Wijsman statistically converges to $A$ and by 
$WS$ the set of all sequences in $CL(X)$ that are Wijsman statistically convergent. Analogously we denote by $A_n\overset{WS^f}{\longrightarrow}A$ if $(A_n)_n$ is Wijsman $f$-statistially convergent to $A$ and
by  $WS^f$ the set of all sequences in $CL(X)$ that are Wijsman $f$-statistically convergent.

From the point of view of data filtering, which is our main interest in this work, the following result shows that the interesting modulus functions are those that are not compatible. 

\begin{theorem}
\label{th1compatible}
Let $(X,d)$ be metric space. Assume that $f$ is a compatible modulus function, and $(A_k)_k\subset CL(X)$, $A\in CL(X)$. The following conditions are equivalents:
\begin{description}
    \item[a)] $(A_k)_k$ converges Wijsman $f$-statistically  to $A$.
    \item[b)] $(A_k)_k$ converges Wijsman statistically to $A$.
\end{description}
\end{theorem}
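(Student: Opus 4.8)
The plan is to fix a point $x\in X$, set $a_k:=d(x,A_k)$ and $L:=d(x,A)$, and reduce both set-convergence notions to a scalar statement about the real sequence $(a_k)_k$. Both Wijsman $f$-statistical and Wijsman statistical convergence are, by definition, quantified over all $x\in X$, and for a fixed $x$ and $\varepsilon>0$ the only relevant object is the counting function $K_n:=\#\{k\leq n\,:\,|d(x,A_k)-d(x,A)|>\varepsilon\}$. Hence it suffices to prove, for each fixed $x$ and each $\varepsilon>0$, the scalar equivalence $\tfrac{1}{f(n)}f(K_n)\to 0 \iff \tfrac{1}{n}K_n\to 0$, and then take the conjunction over all $x\in X$. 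In effect the theorem is the Wijsman (pointwise-in-$x$) packaging of the equivalence between $f$-statistical and statistical convergence of scalar sequences obtained in \cite{jia1} for compatible $f$; so the heart of the matter is scalar, and the reduction itself is purely formal.

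For the implication (a) $\Rightarrow$ (b) I expect that no compatibility is needed and that subadditivity alone suffices. I would argue by contradiction: if $K_n/n\not\to 0$, there are $\eta>0$ and a subsequence with $K_{n_j}\geq \eta\, n_j$. Choosing the integer $m=\lceil 1/\eta\rceil$ (so that $m\eta\geq 1$ and thus $m\eta n_j\geq n_j$), monotonicity and subadditivity of $f$ give $f(n_j)\leq f(m\eta n_j)\leq m\,f(\eta n_j)$, whence $f(\eta n_j)/f(n_j)\geq 1/m$. Monotonicity once more yields $f(K_{n_j})/f(n_j)\geq f(\eta n_j)/f(n_j)\geq 1/m>0$, contradicting $f(K_n)/f(n)\to 0$. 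This settles the forward direction for an arbitrary unbounded modulus.

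For the converse (b) $\Rightarrow$ (a) — the direction where compatibility is genuinely used — I would start from $K_n/n\to 0$. Given $\delta>0$, for all large $n$ we have $K_n\leq \delta n$, so monotonicity of $f$ gives $f(K_n)\leq f(\delta n)$ and therefore $\limsup_n f(K_n)/f(n)\leq \limsup_n f(\delta n)/f(n)=\varphi(\delta)$. Letting $\delta\to 0$ and invoking compatibility ($\lim_{\delta\to 0}\varphi(\delta)=0$), I conclude $\limsup_n f(K_n)/f(n)=0$, i.e.\ Wijsman $f$-statistical convergence at the point $x$. Running both implications for every $x\in X$ then proves the stated equivalence.

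The only delicate point is this converse: one must use the \emph{uniform} estimate encoded in the definition of compatibility to absorb the arbitrarily small prescribed density $\delta$ into a $\limsup$ that still tends to $0$. The forward implication and the passage from the set-valued Wijsman formulation to the scalar distance sequences are routine, so the main obstacle is ensuring that the inequality $\limsup_n f(K_n)/f(n)\leq\varphi(\delta)$ survives the limit $\delta\to 0$ — which is exactly what compatibility of $f$ guarantees, and where the hypothesis is indispensable.
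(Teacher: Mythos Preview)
Your proposal is correct and follows essentially the same approach as the paper: reduce to the scalar distance sequence $(d(x,A_k))_k$ at each point $x\in X$, observe that (a)$\Rightarrow$(b) needs no compatibility, and use compatibility only for (b)$\Rightarrow$(a). The paper simply cites \cite[Theorem 2.1]{wij1} and \cite[Proposition 2.7]{jia1} for the two scalar implications, whereas you unpack and prove them directly; your arguments are precisely the content of those cited results, so the route is the same, just more self-contained.
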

\begin{proof}
Condition  $a)$ implies $b)$ follows without the hypothesis on compatibility of $f$ and it was proved in \cite[Theorem 2.1]{wij1}.

Now assume that $A_n\overset{WS}{\longrightarrow}A$. For each $x\in X$
we have that $(d(x,A_n))_n$ converges statistically to $d(x,A)$. Since $f$ is compatible, according to Proposition 2.7 in \cite{jia1} we get that $(d(x,A_n))_n$ converges $f$-statistically to $d(x,A)$ for any $x\in X$, that is, 
$A_n\overset{WS^f}{\longrightarrow}A$ as desired.
\end{proof}
In brief, when $f$ is compatible then $WS=WS^f$. Moreover when this phenomenon happens then $f$ must be compatible.
\begin{theorem}
\label{converse1}
Assume that for some $f$ we have $WS=WS^f$ then $f$ is compatible.
\end{theorem}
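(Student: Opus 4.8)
The plan is to argue by contraposition: assuming $f$ is \emph{not} compatible, I will exhibit a single sequence of closed sets that is Wijsman statistically convergent but fails to be Wijsman $f$-statistically convergent, so that $WS\neq WS^f$. Since the implication a)$\Rightarrow$b) of Theorem \ref{th1compatible} holds \emph{without} compatibility, we always have $WS^f\subseteq WS$; moreover statistical (resp.\ $f$-statistical) Wijsman limits are unique, because the functional $x\mapsto d(x,A)$ determines the closed set $A$ and the statistical limit of a real sequence is unique. Hence $WS=WS^f$ is equivalent to the single inclusion $WS\subseteq WS^f$, and it suffices to break it.

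First I would reduce everything to a scalar construction on $X=\R$ with the usual metric, taking singletons $A_k=\{x_k\}$ and $A=\{0\}$ with $x_k\in\{0,1\}$. For every $x$ the reverse triangle inequality gives $|d(x,A_k)-d(x,A)|\le |x_k|$, so the discrepancy set at every level $\varepsilon>0$ is contained in $B:=\{k:x_k=1\}$; thus if $B$ has density zero the family is automatically Wijsman statistically convergent to $\{0\}$. Evaluating at $x=0$ returns exactly the sequence $(x_k)$, so to defeat Wijsman $f$-statistical convergence it is enough to make $(x_k)$ statistically null but not $f$-statistically null.

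The heart of the matter is the construction of $B$. Since $\varphi$ is nondecreasing in $\varepsilon$ and bounded by $1$ on $(0,1)$, non-compatibility means $L:=\lim_{\varepsilon\to 0^+}\varphi(\varepsilon)=\inf_{\varepsilon>0}\varphi(\varepsilon)>0$. Choosing $\varepsilon_j=1/j$, the bound $\varphi(\varepsilon_j)\ge L$ lets me pick indices $n_j\uparrow\infty$, growing as fast as I please, with $f(n_j\varepsilon_j)/f(n_j)>L/2$ and $n_j\varepsilon_j\to\infty$. I then place inside each window $(n_{j-1},n_j]$ a top block $I_j$ of $\lfloor n_j\varepsilon_j\rfloor-\lfloor n_{j-1}\varepsilon_{j-1}\rfloor$ consecutive integers ending at $n_j$, and set $B=\bigcup_j I_j$, $x_k=\mathbf{1}_B(k)$. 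Telescoping gives $\#(B\cap[1,n_j])=\lfloor n_j\varepsilon_j\rfloor$, and since the block sits just below $n_j$ while $\varepsilon_j\to 0$, a short estimate on each window shows $\#(B\cap[1,n])/n\to 0$, i.e.\ $B$ has density zero and $(x_k)$ is statistically null.

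Finally, along $n_j$ the $f$-statistical average at any level $\varepsilon\in(0,1)$ equals $f(\#(B\cap[1,n_j]))/f(n_j)=f(\lfloor n_j\varepsilon_j\rfloor)/f(n_j)$; subadditivity yields $f(\lfloor n_j\varepsilon_j\rfloor)\ge f(n_j\varepsilon_j)-f(1)$, whence this average exceeds $L/2-f(1)/f(n_j)$, which stays above $L/4$ for large $j$ because $f$ is unbounded. Thus $(x_k)$ is not $f$-statistically null, so the family is not Wijsman $f$-statistically convergent to $\{0\}$ --- nor to any other closed set, by uniqueness of the statistical limit --- which contradicts $WS\subseteq WS^f$ and completes the contrapositive. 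The step I expect to be the main obstacle is the simultaneous bookkeeping in the construction: tuning the growth of $n_j$ so that each block fits inside its window and the counting function remains sublinear (density zero), while the ratio $f(\mathrm{count})/f(n_j)$ stays bounded away from zero. This is precisely where non-compatibility is used, through the choice of $n_j$ realizing the $\limsup$ in $\varphi(\varepsilon_j)$.
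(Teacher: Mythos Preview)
Your proposal is correct and follows essentially the same approach as the paper: both argue by contraposition, reduce to the indicator sequence of singletons $\{0\}$ and $\{1\}$ on the real line, and construct a set $B\subset\N$ of natural density zero whose $f$-density is bounded away from zero by choosing indices $n_j$ that realize the $\limsup$ in $\varphi(\varepsilon_j)$ for a sequence $\varepsilon_j\to 0$. Your explicit appeal to uniqueness of statistical limits (to rule out Wijsman $f$-statistical convergence to some other closed set) is a point the paper leaves implicit, and your use of subadditivity to handle the floor plays the role of the paper's choice $n_k=\lfloor m_k\varepsilon_k\rfloor+1$.
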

\begin{proof}
Indeed, if $f$ is not compatible then there exists $c>0$ and a subsequence $\varepsilon_k$ such that $\varphi(\varepsilon_k)=\limsup_n\frac{f(n\varepsilon_k)}{f(n)}>c$. In fact since $\varphi(\cdot)$ is increasing, we have that $\varphi(\varepsilon)>c$ for any $\varepsilon>0$.

Let us fix a decreasing sequence $\varepsilon_k$ converging to $0$. For each $k$ we can select $m_k$ large enough such that $f(m_k\varepsilon_k)\geq c f(m_k)$.
We can select $m_{k+1}$ inductively satisfying
\begin{equation}
\label{desigualdad}
1-\varepsilon_{k+1}-\frac{1}{m_{k+1}}>\frac{(1-\varepsilon_k)m_k}{m_{k+1}}
\end{equation}
Now the idea is to construct a subset with prescribed density. Let us denote $\lfloor x\rfloor $ the integer part of $x\in\mathbb{R}$. Set $n_k=\lfloor m_k\varepsilon_k\rfloor+1$. Extracting a subsequence if it is necessary, we can assume that $n_1<n_2<\cdots$, $m_1<m_2<\cdots$. Let us define  $A_k=[m_{k+1}-(n_{k+1}-n_k)]\cap\mathbb{N}$. Condition (\ref{desigualdad}) is to guarantee that $A_k\subset [m_k,m_{k+1}]$. Let us denote $A=\bigcup_kA_k$.

Without loss of generality, we can suppose that the metric space is the complex plane with the euclidean metric. Let us consider the following sequence of subsets:
$$
B_k=\begin{cases}
\{1\} & k\in A\\
\{0\} & k\notin A.
\end{cases}
$$
Assume that $B=\{0\}$, and let us see that $B_k\overset{WS^f}{\nrightarrow} B$. Indeed, if we take $x_0=0$ then
$$
d(0,B_k)=\begin{cases}
1 & k\in A\\
0 & k\notin A
\end{cases}
$$
and $d(0,B)=0$ therefore
\begin{eqnarray*}
\frac{f(\#\{k\leq m_{k} \,/ |d(0,B_k)-d(0,B)|>1/2\})}{f(m_{k})}&=&\frac{f(n_k)}{f(m_k)} \\
&\geq& \frac{f(m_k\varepsilon_k)}{f(m_k)}\geq c.
\end{eqnarray*}

Now, let us show that $B_k\overset{WS}{\rightarrow}B$. Indeed, for any $m$, there exists $k$ such that $m_k<m\leq m_{k+1}$. Moreover, we can suppose without loss that $m\in A$, that is, $m_{k+1}-n_{k+1}+n_{k}\leq m$. Let us consider $x\in \mathbb{C}$ we can suppose without loss that $|1-x|\neq |x|$. Thus for any $\varepsilon>0$:
\begin{eqnarray*} 
\frac{\#\{l\leq m \,:\,|d(x,B_l)-d(x,B)|>\varepsilon\}}{m}&\leq& \frac{\#\{l\leq  m_{k} \,:\,|d(x,B_k)-d(x,B)|>\varepsilon\}}{m_k}\\&&+\frac{n_{k+1}-n_k}{m_{k+1}-n_{k+1}+n_{k}} 
\\
&\leq &\frac{n_k}{m_k}+\frac{1}{\frac{m_{k+1}}{n_{k+1}-n_k}-1}\to 0
\end{eqnarray*} 
as $k\to \infty$, which proves the desired result.
\end{proof}
Now, we turn our attention to the Wijsman $f$-strong Cesàro convergence.
Let us recall that a sequence $(x_n)_n$ is said to be $f$-strong Cesàro convergent to $L$ if
$$
\lim_{n\to\infty}\frac{f\left(\sum_{k=1}^n|x_k-L|\right)}{f(n)}=0.
$$
The above definition suggests the following notion.
\begin{definition}
Assume that $(X,d)$ is a metric space, $(A_k)_k\subset CL(X)$ and $A\in CL(X)$. We say that $(A_k)_k$ is  Wijsman $f$-strong Cesàro convergent 
to $A$ if for any $x\in X$ the sequence $(d(x,A_k))_k$ is $f$-strong 
Cesàro convergent to $d(x,A)$.
\end{definition}
A result similar to Theorem \ref{th1compatible} is true for the  Wijsman $f$-strong Cesàro convergence.
\begin{theorem}
\label{th2compatible}
Let $(X,d)$ be metric space. Assume that $f$ is a compatible modulus function, and $(A_k)_k\subset CL(X)$, $A\in CL(X)$. The following conditions are equivalents:
\begin{description}
    \item[a)] $(A_k)_k$ converges Wijsman $f$-strong Cesàro  to $A$.
    \item[b)] $(A_k)_k$ converges Wijsman strong Cesàro to $A$.
\end{description}
\end{theorem}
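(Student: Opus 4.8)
The plan is to reduce the statement to a fact about a single real sequence and then mirror the two-sided argument of Theorem \ref{th1compatible}. Both Wijsman convergences are defined pointwise in $x$: fixing $x\in X$ and writing $L=d(x,A)$, $u_k=d(x,A_k)$ and $t_n=\sum_{k=1}^{n}|u_k-L|$, condition b) says $t_n/n\to 0$ for every $x$, while condition a) says $f(t_n)/f(n)\to 0$ for every $x$. Hence it suffices to prove, for each fixed $x$, the scalar equivalence: $f(t_n)/f(n)\to 0$ if and only if $t_n/n\to 0$. Note also that since $t_n$ is nondecreasing the quantities involved are well behaved, so no measurability or ordering issues arise.

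For the implication a)$\Rightarrow$b) I would not use compatibility, exactly as in the statistical case. Arguing by contraposition, if $t_n/n\not\to 0$ there is $\delta\in(0,1)$ and a subsequence with $t_{n_j}\geq \delta\, n_j$, so monotonicity of $f$ gives $f(t_{n_j})\geq f(\delta n_j)$. The elementary lemma I would invoke is that subadditivity yields $f(n)=f\big((1/\delta)\,\delta n\big)\leq \lceil 1/\delta\rceil\, f(\delta n)$, whence $f(\delta n)/f(n)\geq 1/\lceil 1/\delta\rceil>0$. Consequently $f(t_{n_j})/f(n_j)\geq 1/\lceil 1/\delta\rceil$ stays bounded away from $0$, contradicting a). This shows Wijsman $f$-strong Ces\`aro convergence always implies Wijsman strong Ces\`aro convergence.

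For b)$\Rightarrow$a) compatibility enters precisely where it did for statistical convergence. Given $\varepsilon>0$, the hypothesis $t_n/n\to 0$ means $t_n\leq \varepsilon n$ for all large $n$; since $f$ is increasing, $f(t_n)\leq f(\varepsilon n)$ for such $n$, and therefore $\limsup_n f(t_n)/f(n)\leq \limsup_n f(\varepsilon n)/f(n)=\varphi(\varepsilon)$. Letting $\varepsilon\to 0$ and using that $f$ is compatible, i.e. $\varphi(\varepsilon)\to 0$, forces $\limsup_n f(t_n)/f(n)=0$, which is the $f$-strong Ces\`aro convergence of $(u_k)_k$ to $L$; as $x$ was arbitrary this is condition a).

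The routine parts are the pointwise reduction and the monotonicity/subadditivity manipulations. I expect the only genuinely delicate point to be the lower bound $f(\delta n)/f(n)\geq 1/\lceil 1/\delta\rceil$ used in a)$\Rightarrow$b), since it is exactly what guarantees that strong $f$-Ces\`aro convergence is never strictly weaker than ordinary strong Ces\`aro convergence for an arbitrary unbounded modulus; once this is secured, compatibility supplies the reverse direction cleanly through the function $\varphi$.
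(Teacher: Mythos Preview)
Your proof is correct and follows essentially the same approach as the paper: both reduce pointwise to the scalar statement that $f$-strong Ces\`aro convergence and strong Ces\`aro convergence coincide, with a)$\Rightarrow$b) holding for any modulus and b)$\Rightarrow$a) using compatibility via $\varphi$. The only difference is presentational: the paper cites Propositions~2.3 and~2.8 of \cite{jia1} for the scalar equivalence, whereas you supply those arguments inline (the subadditivity bound $f(\delta n)/f(n)\geq 1/\lceil 1/\delta\rceil$ and the $\varphi(\varepsilon)$ estimate).
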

\begin{proof}
According to Proposition 2.3 in \cite{jia1}, since for any modulus function, a $f$-strong Cesàro convergent sequence 
is strong Cesàro convergent, we get that $a)$ implies $b)$ follows directly without using the hypothesis on compatibility on $f$.

Now, for each $x\in X$
we have that $(d(x,A_n))_n$ is strong Cesàro convergent   to $d(x,A)$. Since $f$ is compatible, according to Proposition 2.8 in \cite{jia1} we get that $(d(x,A_n))_n$ is $f$-strong Cesàro convergent to $d(x,A)$ for any $x\in X$, that is, 
$(A_n)_n$ is Wijsman $f$-strong Cesàro convergent to $A$ as we desired. 
\end{proof}
Let us denote by $WN$ the class of sequences of sets that are Wijsman strong Cesàro convergent and by $WN^f$ the class of sequences which are Wijsman $f$-strong Cesàro convergent. When $f$ is compatible then $WN=WN^f$. Now let us shown the converse result.

\begin{theorem}
\label{converse2}
Assume that for some $f$ we have $WN=WN^f$ then $f$ is compatible.
\end{theorem}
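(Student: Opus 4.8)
The plan is to prove the contrapositive, paralleling the proof of Theorem~\ref{converse1}. Recall from the proof of Theorem~\ref{th2compatible} that the implication $a)\Rightarrow b)$ holds for every modulus function, so $WN^f\subseteq WN$ always. Hence it suffices to show that if $f$ is not compatible, then $WN\setminus WN^f\neq\emptyset$; that is, to exhibit a sequence of closed sets that is Wijsman strong Cesàro convergent but \emph{not} Wijsman $f$-strong Cesàro convergent.

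First I would reuse verbatim the combinatorial construction from the proof of Theorem~\ref{converse1}. Since $f$ is not compatible and $\varphi$ is increasing, fix $c>0$ with $\varphi(\varepsilon)>c$ for every $\varepsilon>0$, a decreasing sequence $\varepsilon_k\to0$, and integers $m_k$ with $f(m_k\varepsilon_k)\geq c\,f(m_k)$ chosen inductively so that \eqref{desigualdad} holds; put $n_k=\lfloor m_k\varepsilon_k\rfloor+1$, and define $A=\bigcup_k A_k$ and the sequence $(B_k)_k$ in $\mathbb{C}$ exactly as there, with $B=\{0\}$.

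Then I would establish the two halves. For the failure of Wijsman $f$-strong Cesàro convergence, evaluate the Cesàro sum at $x_0=0$: since $|d(0,B_l)-d(0,B)|$ equals $1$ for $l\in A$ and $0$ otherwise, the sum $\sum_{l\le m_k}|d(0,B_l)-d(0,B)|$ equals $\#(A\cap[1,m_k])=n_k$, whence
$$
\frac{f\!\left(\sum_{l\le m_k}|d(0,B_l)-d(0,B)|\right)}{f(m_k)}=\frac{f(n_k)}{f(m_k)}\geq\frac{f(m_k\varepsilon_k)}{f(m_k)}\geq c,
$$
so the defining limit cannot be $0$ and $(B_k)_k$ does not converge Wijsman $f$-strong Cesàro to $B$. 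For membership in $WN$, note that $(d(x,B_k))_k$ takes only the two values $|x|$ and $|x-1|$, so for every $x\in\mathbb{C}$
$$
\frac1n\sum_{k=1}^n|d(x,B_k)-d(x,B)|=\bigl||x-1|-|x|\bigr|\,\frac{\#(A\cap[1,n])}{n}\leq\frac{\#(A\cap[1,n])}{n},
$$
and the right-hand side tends to $0$ because $A$ has natural density zero --- which is precisely what the $WS$ computation at the end of the proof of Theorem~\ref{converse1} (using \eqref{desigualdad} to control arbitrary $n$, not only the subsequence $m_k$) already establishes. Thus $(B_k)_k$ is Wijsman strong Cesàro convergent to $B$.

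Finally, to upgrade ``not convergent to $B$'' into ``not in $WN^f$'', I would invoke that every $f$-strong Cesàro convergent real sequence is strong Cesàro convergent (Proposition~2.3 in \cite{jia1}): if $(B_k)_k$ were Wijsman $f$-strong Cesàro convergent to some $A'$, it would be Wijsman strong Cesàro convergent to $A'$, forcing $d(x,A')=d(x,B)$ for all $x$ by uniqueness of strong Cesàro limits, i.e. $A'=B$, which contradicts the displayed lower bound. Hence $(B_k)_k\in WN\setminus WN^f$, and the contrapositive yields the theorem. The only genuinely delicate point is the density-zero estimate controlling the Cesàro average over all $n$, but this is identical to the corresponding step in Theorem~\ref{converse1}; everything else is a direct transcription with the counting quotient replaced by the $f$-quotient.
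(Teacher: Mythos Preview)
Your proposal is correct and follows essentially the same approach as the paper's own proof: both reuse the construction from Theorem~\ref{converse1}, compute the $f$-Cesàro quotient at $x_0=0$ to get the lower bound $c$, and deduce $WN$-convergence from the density-zero estimate already proved there. Your write-up is in fact a bit more careful than the paper's --- you make explicit the bound $\bigl||x-1|-|x|\bigr|\leq 1$ for general $x$, and you spell out the uniqueness-of-limits step needed to pass from ``$(B_k)_k$ does not $WN^f$-converge to $B$'' to ``$(B_k)_k\notin WN^f$'' --- but the underlying argument is the same.
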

\begin{proof}
Indeed, let us consider the sequences $(\varepsilon_k)_k, (m_k)_k, (n_k)_k$ and $(A_k)_k$ provided by Theorem \ref{converse1}.
Let us denote $A=\bigcup_kA_k$.

Again without loss, we can suppose that the metric space is the complex plane with the euclidean metric. Let us consider the following sequence of subsets:
$$
B_k=\begin{cases}
\{1\} & k\in A\\
\{0\} & k\notin A.
\end{cases}
$$
Assume that $B=\{0\}$, and let us see that $B_k\overset{WN^f}{\nrightarrow} B$. Indeed, let us consider $x_0=0$, then
\begin{eqnarray*}
\frac{f\left(\sum_{k=1}^{m_k}|d(0,B_k)-d(0,B)|\right)}{f(m_k)} 
&=&
\frac{f(\#\{k\leq m_{k} \,/ |d(0,B_k)-d(0,B)|>1/2\})}{f(m_{k})}\\
&=&\frac{f(n_k)}{f(m_k)} \\
&\geq& \frac{f(m_k\varepsilon_k)}{f(m_k)}\geq c.
\end{eqnarray*}
However $B_k\overset{WN}{\rightarrow}B$. Indeed, since  for any $x\in X$
$\sum_{k=1}^{m}|d(x,B_k)-d(x,B)|=\#\{k\leq m \,/ |d(x,B_k)-d(x,B)|>\varepsilon\}$ and since by Theorem \ref{converse1} the sequence $(B_k)_k$ is Wijsman statistically convergent to $B$, we obtain that $(B_k)_k$ is Wijsman strong convergent to $B$ as we desired to prove.
\end{proof}
Now, to the end of this section, let us explore the relationship between the concept of Wijsman $f$-statistically convergence and Wijsman $f$-strong Cesàro convergence.

To establish this connection we need the extra hypothesis introduced in \cite{wij1}: {\it Wijsman boundedness} of a sequence of subsets. A sequence $(A_k)_k\subset CL(X)$ is said to be {\it Wijsman bounded} if for any $x\in X$ the sequence $(d(x,A_n))_n$ is bounded. Let us observe that by the triangular inequality a sequence $(A_k)_k\subset CL(X)$ is Wijsman bounded if there exists $x_0\in X$ such that $(d(x_0,A_n))_n$ is bounded. Let us observe also that Wijsman boundedness notion does not imply boundedness of the subsets $(A_k)$.

In fact, this boundedness condition  can be sharpened. Khan and Orhan in \cite{khan} discovered that the classical result that connect statistically convergence with strong Cesàro convergence can be established for uniform integrable sequences. 
Let $Y$ be a normed space. A sequence $(x_n)_n\subset Y$ is said to be {\it uniformly integrable} if
$$
\lim_{c\to \infty}\sup_n \frac{1}{n}\sum_{\substack{k=1 \\ \|x_k\|\geq c}}^n\|x_k\|=0.
$$
If a sequence $(x_n)_n$ is uniformly integrable then $(x_n-L)_n$ is also uniformly integrable for every $L\in Y$.

Our result pivots on this notion. Let us refine also the notion of Wijsman boundedness:
\begin{definition}
Let $(X,d)$ be a metric space. A sequence $(A_k)_k\in CL(X)$ is said to be
Wijsman uniform integrable if for any $x\in X$ the sequence $(d(x,A_n))_n$ is uniform integrable.
\end{definition}
Now, we are in position to state our result.
\begin{theorem}
Assume $(X,d)$ is a metric space and $(A_k)_k\subset CL(X)$, $A\in CL(X)$.
\begin{description}
\item[a)] If $(A_k)_k$ is Wijsman $f$-strong Cesàro convergent to $A$ then $(A_n)_n$ is Wijsman $f$-statistically convergent to $A$ and $(A_k)_k$ is Wijsman uniformly integrable.
\item[b)] Assume that $f$ is compatible. If $(A_k)_k$ is Wijsman $f$-statistically convergent to $A$ and $(A_k)_k$ is Wijsman uniformly bounded then $(A_k)_k$ is Wijsman $f$-strong Cesàro convergent to $A$.
\end{description}
\end{theorem}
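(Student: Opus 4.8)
The plan is to reduce both statements to scalar facts about the sequences of distances. Fix $x\in X$ and write $y_k=d(x,A_k)$ and $L=d(x,A)$. By the very definitions, Wijsman $f$-statistical convergence, Wijsman $f$-strong Cesàro convergence and Wijsman uniform integrability of $(A_k)_k$ are precisely the assertions that, for every $x$, the scalar sequence $(y_k)_k$ is $f$-statistically convergent to $L$, $f$-strong Cesàro convergent to $L$, and uniformly integrable, respectively. Hence it suffices to prove each implication at the level of $(y_k)_k$, the conclusion then following for all $x$.

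For part a), assume $(y_k)_k$ is $f$-strong Cesàro convergent to $L$. For the $f$-statistical convergence, fix $\varepsilon>0$; the elementary counting inequality $\varepsilon\,\#\{k\leq n : |y_k-L|>\varepsilon\}\leq\sum_{k=1}^n|y_k-L|$ gives $\#\{k\leq n : |y_k-L|>\varepsilon\}\leq\lceil 1/\varepsilon\rceil\sum_{k=1}^n|y_k-L|$. Applying $f$ (increasing) and then subadditivity in the form $f(mt)\leq m f(t)$ for positive integers $m$, I obtain
$$
\frac{f\big(\#\{k\leq n : |y_k-L|>\varepsilon\}\big)}{f(n)}\;\leq\;\lceil 1/\varepsilon\rceil\,\frac{f\big(\sum_{k=1}^n|y_k-L|\big)}{f(n)}\;\longrightarrow\;0,
$$
which is exactly Wijsman $f$-statistical convergence. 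For uniform integrability, recall that by Proposition 2.3 in \cite{jia1} $f$-strong Cesàro convergence forces ordinary strong Cesàro convergence, i.e. $\frac1n\sum_{k=1}^n|y_k-L|\to 0$. A short tail estimate then shows $(y_k-L)_k$ is uniformly integrable: given $\eta>0$ choose $N$ with $\frac1n\sum_{k=1}^n|y_k-L|<\eta$ for $n\geq N$; for $n\geq N$ the tail $\frac1n\sum_{|y_k-L|\geq c}|y_k-L|$ is bounded by this average, while the finitely many indices $n<N$ contribute nothing once $c$ exceeds $\max_{k<N}|y_k-L|$, so the supremum over $n$ tends to $0$ as $c\to\infty$. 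Finally, the remark that uniform integrability is invariant under constant shifts upgrades this to uniform integrability of $(y_k)_k$ itself.

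For part b), assume $f$ is compatible, $(y_k)_k$ is $f$-statistically convergent to $L$, and $(y_k)_k$ is uniformly integrable (which holds in particular under the Wijsman boundedness hypothesis, since bounded scalar sequences are trivially uniformly integrable). Compatibility enters twice. First, by Proposition 2.7 in \cite{jia1}, under compatibility $f$-statistical convergence is equivalent to ordinary statistical convergence, so $(y_k)_k$ converges statistically to $L$. Next, the result of Khan and Orhan \cite{khan} — that for uniformly integrable sequences statistical convergence and strong Cesàro convergence coincide — yields $\frac1n\sum_{k=1}^n|y_k-L|\to 0$. Finally, invoking compatibility again through Proposition 2.8 in \cite{jia1}, ordinary strong Cesàro convergence implies $f$-strong Cesàro convergence of $(y_k)_k$ to $L$. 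Since $x$ was arbitrary, $(A_k)_k$ is Wijsman $f$-strong Cesàro convergent to $A$.

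The routine computations are the counting inequality together with the subadditive bound $f(mt)\leq m f(t)$ in part a), and the tail estimate for uniform integrability. The genuine content, and the step I expect to be the main obstacle, is the double use of compatibility in part b): the argument only closes because compatibility lets us descend from $f$-statistical to ordinary statistical convergence and ascend back from ordinary strong Cesàro to $f$-strong Cesàro convergence, with the scalar Khan–Orhan theorem bridging the two ordinary notions in between. Without compatibility neither passage is available, which explains why the hypothesis is imposed only in part b).
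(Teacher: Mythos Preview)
Your proof is correct and follows the same high-level strategy as the paper: reduce both parts to the corresponding scalar statements about $(d(x,A_k))_k$ and then invoke known results. The difference is one of packaging. The paper simply cites Theorems~3.2 and~3.4 of \cite{jia1} as black boxes for the scalar implications, whereas you unpack those results: in part~a) you give the counting-inequality argument for $f$-statistical convergence directly and derive uniform integrability from ordinary strong Ces\`aro convergence via a tail estimate; in part~b) you chain Proposition~2.7 of \cite{jia1}, the Khan--Orhan theorem, and Proposition~2.8 of \cite{jia1}. Your route is thus more self-contained and makes explicit where compatibility is actually needed (only in the ascent back to $f$-strong Ces\`aro, since the direction $f$-statistical $\Rightarrow$ statistical holds for every modulus), at the cost of some extra length; the paper's version is terser but relies on the reader having \cite{jia1} at hand.
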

\begin{proof}
To show a), let us observe that for any $x\in X$ the sequence $(d(x,A_n))_n$ is $f$-strong Cesàro convergent to $d(x,A)$. Therefore, according to Theorem 3.2 in \cite{jia1}, the sequence $(d(x,A_n))_n$ is $f$-statistically convergent to $d(x,A)$ and it is uniformly integrable. That is, $(A_k)_k$ is Wijsman $f$-statistically convergent to $A$ and Wijsman uniformly integrable.

On the other hand, if for any $x\in X$ the sequence $(d(x,A_n))_n$ is $f$-statistically convergent to $d(x,A)$ and it is uniformly integrable, since $f$ is compatible, according to Theorem 3.4 in \cite{jia1} we get that $(d(x,A_n))_n$ is $f$-strong Cesàro convergent to $d(x,A)$. Hence, $(A_k)_k$ is Wijsman $f$-strong Cesàro convergent to $A$ as we desired.
\end{proof}

There is a converse of the above result. Let us denote by $WI$
the set of all Wijsman uniformly integrable sequence of subsets. We have shown that for any modulus function $f$, $WN^f\subset WS^f\cap WI$, and 
if $f$ is compatible then
$WN^f=WS^f\cap WI$.
\begin{theorem}
Assume that $WN^f=WS^f\cap WI$ then $f$ is compatible.
\end{theorem}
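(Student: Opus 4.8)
The plan is to prove the contrapositive: assuming $f$ is not compatible, I will exhibit a sequence of closed sets lying in $WS^f\cap WI$ but not in $WN^f$, contradicting the hypothesis $WN^f=WS^f\cap WI$. Since the inclusion $WN^f\subset WS^f\cap WI$ holds for every modulus function, it suffices to produce a single witness in $(WS^f\cap WI)\setminus WN^f$. As in the proof of Theorem \ref{converse1}, non-compatibility furnishes a constant $c>0$ with $\varphi(\varepsilon)>c$ for every $\varepsilon>0$, together with a decreasing sequence $\varepsilon_k\to 0$ and indices $m_k\uparrow\infty$, which I may additionally take to satisfy $m_k\ge 2m_{k-1}$ and $f\!\left(\tfrac{1}{2}\varepsilon_k m_k\right)\ge c\,f(m_k)$; the latter is available because $\varphi(\varepsilon_k/2)>c$ means $f(n\varepsilon_k/2)>c\,f(n)$ for infinitely many $n$.

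The decisive point is that the two-valued construction of Theorems \ref{converse1} and \ref{converse2} cannot serve here: for a bounded sequence taking only the values $0$ and $1$ one has $\sum_{k\le n}|d(x,B_k)-d(x,B)|=\#\{k\le n:|d(x,B_k)-d(x,B)|>1/2\}$, so Wijsman $f$-statistical and Wijsman $f$-strong Cesàro convergence coincide and cannot be separated. Instead I place many \emph{small} values. Working, as before, in $\C$ with the euclidean metric, I define real numbers $x_j=\varepsilon_k$ for $j$ in the block $(m_{k-1},m_k]$, and set $B_j=\{x_j\}$ and $B=\{0\}$. Each $B_j$ is a closed (singleton) subset of $\C$, with $d(x,B_j)=|x-x_j|$ and $d(x,B)=|x|$, so that the reverse triangle inequality yields the uniform bound $|d(x,B_j)-d(x,B)|\le x_j$ for every $x\in\C$.

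With this bound the three required properties follow. For fixed $\varepsilon>0$ choose $k_0$ with $\varepsilon_{k_0}<\varepsilon$; then $x_j<\varepsilon$ for all $j>m_{k_0-1}$, so $\{j:|d(x,B_j)-d(x,B)|>\varepsilon\}\subset\{j:x_j>\varepsilon\}\subset[1,m_{k_0-1}]$ has bounded cardinality, whence $f(\#\{\cdots\})/f(n)\le f(m_{k_0-1})/f(n)\to 0$ since $f$ is unbounded; thus $(B_j)_j\overset{WS^f}{\longrightarrow}B$. Since $d(x,B_j)\le|x|+\varepsilon_1$ is bounded in $j$, each $(d(x,B_j))_j$ is uniformly integrable, so $(B_j)_j\in WI$. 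Finally, taking $x=0$ and $n=m_k$, the block $(m_{k-1},m_k]$ has length at least $m_k/2$, so $\sum_{j=1}^{m_k}x_j\ge \tfrac{1}{2}\varepsilon_k m_k$ and hence $f\!\left(\sum_{j=1}^{m_k}x_j\right)/f(m_k)\ge f\!\left(\tfrac{1}{2}\varepsilon_k m_k\right)/f(m_k)\ge c$; therefore $(B_j)_j$ is not Wijsman $f$-strong Cesàro convergent to $B$.

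To conclude $(B_j)_j\notin WN^f$ I must rule out $f$-strong Cesàro convergence to any other set, which is the main technical subtlety; it is handled by uniqueness. If $(B_j)_j$ were Wijsman $f$-strong Cesàro convergent to some $A'$, then by Theorem 3.2 in \cite{jia1} applied to each $(d(x,B_j))_j$ it would be Wijsman $f$-statistically convergent to $A'$; but it is already Wijsman $f$-statistically convergent to $B$, and uniqueness of the $f$-statistical limit of each real sequence forces $d(x,A')=d(x,B)$ for all $x$, i.e. $A'=B$ (closed sets are determined by their distance functions), contradicting the previous paragraph. Hence $(B_j)_j$ witnesses $WN^f\ne WS^f\cap WI$, completing the contrapositive. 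The hard part is precisely the construction in the second paragraph: the mass must be large enough in the Cesàro sum for non-compatibility to bite, yet diffuse enough (small values on a positive proportion of $[1,m_k]$) that it neither inflates the $f$-statistical counts nor destroys uniform integrability.
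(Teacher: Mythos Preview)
Your proof is correct and follows the same overall strategy as the paper: prove the contrapositive by building, in $\mathbb{C}$, a sequence of singleton sets $B_j=\{x_j\}$ with $x_j\to 0$, so that $(B_j)_j$ is trivially in $WS^f\cap WI$, yet the choice of $x_j$ makes the $f$-Ces\`aro averages at $x=0$ stay $\geq c$ along a subsequence, forcing $(B_j)_j\notin WN^f$.

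The implementations differ. The paper reuses the sets $A_k\subset(m_k,m_{k+1}]$ from Theorem~\ref{converse1} and places on them the tailored values $r_k=(m_{k+1}\varepsilon_{k+1}-m_k\varepsilon_k)/(m_{k+1}-m_k)$, chosen so that the partial sums telescope to $m_k\varepsilon_k$. Your construction is simpler: take $x_j=\varepsilon_k$ constant on the whole block $(m_{k-1},m_k]$ with $m_k\geq 2m_{k-1}$, and use $\varphi(\varepsilon_k/2)>c$ to pick $m_k$. This avoids both the $A_k$ machinery and the $r_k$ formula at the cost of a harmless factor $1/2$. You also make explicit two points the paper leaves implicit: that $(B_j)_j$ actually lies in $WS^f$ (not just $WS$), which is immediate since $x_j\to 0$ in the ordinary sense; and that failure of $f$-strong Ces\`aro convergence to $B$ really gives $(B_j)_j\notin WN^f$, via uniqueness of $f$-statistical limits. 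Both approaches work; yours is more self-contained.
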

\begin{proof}
If $f$ is not compatible. Thus, as in the proof of Theorem \ref{converse1} we can construct  sequences $(\varepsilon_k)_k$, $(m_k)_k$ such that
$f(m_k\varepsilon_k)\geq c f(m_k)$ for some $c>0$. Moreover, we can construct $(m_k)_k$ inductively, such that the sequence
$$
r_{k}= \frac{m_{k+1}\varepsilon_{k+1}-m_k\varepsilon_k}{m_{k+1}-m_k}
$$
is decreasing and converging to $0$.

Let us consider the following subsets:
$$
B_l=\begin{cases}
\{r_k\} & l\in A_k \\
\{0\} & k\notin A.
\end{cases}
$$
Since $(r_k)_k$ is decreasing, clearly $(B_k)$ is Wijsman bounded and therefore Wijsman integrable. Moreover since $(r_k)_k$ is decreasing to $0$, the sequence $(B_j)_j$ is Wijsman statistically convergent to $B=\{0\}$. However if we take $x_0=0$ then
$$
\frac{f\left(\sum_{l=1}^{m_k} |d(0,B_l)-d(0,B)| \right)}{f(m_k)}=\frac{f(m_k\varepsilon_k)}{f(m_k)}\geq 0
$$
which proves that $(B_k)_k$ is not Wijsman $f$-strong Cesàro convergent to $B=\{0\}$ as we desired to prove.
\end{proof}

\section{$\Theta$-compatible modulus functions versus Wijsman lacunary $f$-statistical convergence}
\label{tres}

By a lacunary sequence  $\theta=\{k_r\}_{r\geq 0}\subset \mathbb{N}$ with $k_0=0$, we will means   an increasing sequence of natural numbers such that $h_r=k_r-k_{r-1}\to \infty$ as $r\to \infty$. Assume that $f$ is a modulus function.
Denoting the intervals $I_r=(k_{r-1},k_r]$,
a sequence $(x_n)$ on a normed space $X$ is said to be $f$ strong lacunary convergent to $L$ provided (see \cite{sember})
$$
\lim_{r\to\infty}\frac{1}{f(h_r)}f\left(\sum_{k\in I_r}^{\infty}\|x_k-L\|\right)=0.
$$
and is said to be lacunary $f$-statistical convergent to $L$ (see \cite{b5}) if for any $\varepsilon >0$ the subset $A_\varepsilon=\{k\in\mathbb{N}\,:\,\|x_k-L\|>\varepsilon\}$ has 
($\theta, f)$-density equal $0$. That is,
$$
d_{\theta,f}(A_\varepsilon)=\lim_{r\to\infty}\frac{1}{f(h_r)}
f\left(\#\{k\in I_r\,:\, \|x_k-L\|>\varepsilon\}\right)=0.
$$

In \cite{wij2} V. K. Bhardwaj, S. Dhawan applied  these new convergence methods in the study of convergence of sets.
We say that a sequence $(A_k)_k\subset CL(X)$ is Wijsman lacunary $f$-statistical convergent to $A\in CL(X)$ if for any
$x\in X$ the sequence $(d(x,A_k))_k$  is lacunary $f$-statistical convergent to $d(x,A)$. Let us denote by $WS_\theta^f$ the set of all Wijsman $\theta$-lacunary $f$-statistical convergent sequences. 
Analogously we say that $(A_k)_k\subset CL(X)$ is Wijsman lacunary $f$-strong Cesàro convergent to $A\in CL(X)$ if for any
$x\in X$ the sequence $(d(x,A_k))_k$  is lacunary $f$-strong Cesàro convergent to $d(x,A)$. Let us denote by $WN_\theta^f$ the set of all Wijsman $\theta$-lacunary $f$-strong Cesàro convergent sequences.

Our results pivots on the following notion.
\begin{definition}
\label{thetacompatible}
Let us denote  $\varphi_{\theta}(\varepsilon)=\limsup_{t\to\infty}\frac{f(h_t\varepsilon)}{f(h_t)}$. We will say that  $f$ is $\theta$-compatible if  if $\lim_{\varepsilon\to 0}\varphi_\theta(\varepsilon)=0$. 
\end{definition}
Clearly, if $f$ is compatible then for any lacunary sequence $\theta$, $f$ is $\theta$-compatible.

\begin{theorem}
Let $(X,d)$ be a metric space and $(A_k)_k\subset CL(X)$, $A\in CL(X)$.  Assume that $\theta=(k_t)_t$ is a lacunary sequence.
\begin{description}
\item[a)] For any modulus function $f$, if $(A_n)_n$ is Wijsman lacunary $f$-statistical convergent to $A$ then $(A_n)_n$ also converges Wijsman lacunary statistical convergent to $A$.
\item[b)]  If $f$ is $\theta$-compatible and $(A_n)_n$ is Wijsman lacunary statistical convergent to $A$ then $(A_n)_n$ is Wijsman $f$-statistical convergent to $A$.
\end{description}
\end{theorem}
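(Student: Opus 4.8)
The plan is to reduce both parts to the scalar case and invoke the corresponding one-dimensional results about lacunary $f$-statistical convergence, exactly mirroring the strategy used in Theorems \ref{th1compatible} and \ref{th2compatible}. The whole theorem is a Wijsman-ification: a statement about set-convergence that is declared pointwise, for every $x\in X$, in terms of the scalar sequence $(d(x,A_k))_k$. So the core of the argument is not about sets at all, but about sequences of nonnegative reals, and the passage from scalars to sets is automatic once the scalar statement is fixed for each $x$.

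For part \textbf{a)}, I would fix $x \in X$ and set $L = d(x,A)$, $x_k = d(x,A_k)$. By hypothesis $(x_k)_k$ is lacunary $f$-statistical convergent to $L$, meaning that for each $\varepsilon > 0$ the $(\theta,f)$-density of $A_\varepsilon = \{k : |x_k - L| > \varepsilon\}$ vanishes. I want to conclude ordinary lacunary statistical convergence, i.e. that $\lim_{r\to\infty} \frac{1}{h_r}\#\{k \in I_r : |x_k - L| > \varepsilon\} = 0$. The key inequality is the sub-additivity-driven bound $f(mn) \le m\,f(n)$ for a modulus function, which yields $f(N) \le N\,f(1)$ and hence, writing $N_r = \#\{k \in I_r : |x_k-L|>\varepsilon\}$, a comparison between $\frac{N_r}{h_r}$ and $\frac{f(N_r)}{f(h_r)}$ up to the unboundedness of $f$. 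The standard trick (already exploited in \cite{jia1} and quoted for the non-lacunary case in \cite[Theorem 2.1]{wij1}) is: if $\frac{N_r}{h_r} \not\to 0$ along some subsequence, then $N_r \ge \delta h_r$ on that subsequence, and monotonicity of $f$ together with $f(\delta h_r) \le f(h_r)$-type control forces $\frac{f(N_r)}{f(h_r)} \ge \frac{f(\delta h_r)}{f(h_r)}$ to stay bounded away from $0$, contradicting the hypothesis. So a) holds for every $x$, hence $(A_n)_n$ is Wijsman lacunary statistical convergent to $A$. This direction needs no compatibility, exactly as stated.

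For part \textbf{b)}, I would again fix $x$, $L$, $x_k$ as above and now assume $(x_k)_k$ is lacunary statistically convergent to $L$, with $f$ being $\theta$-compatible. The goal is lacunary $f$-statistical convergence. Fix $\varepsilon > 0$ and write $N_r = \#\{k \in I_r : |x_k-L|>\varepsilon\}$; the hypothesis gives $\frac{N_r}{h_r} \to 0$, and I must show $\frac{f(N_r)}{f(h_r)} \to 0$. Given $\eta > 0$, $\theta$-compatibility supplies $\varepsilon' > 0$ with $\varphi_\theta(\varepsilon') < \eta$, so $\frac{f(h_t \varepsilon')}{f(h_t)} < \eta$ for all large $t$; since $N_r \le \varepsilon' h_r$ eventually (because $N_r/h_r \to 0$), monotonicity of $f$ gives $f(N_r) \le f(\varepsilon' h_r)$ and therefore $\frac{f(N_r)}{f(h_r)} \le \frac{f(\varepsilon' h_r)}{f(h_r)} < \eta$ for all large $r$. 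As $\eta$ was arbitrary, $\frac{f(N_r)}{f(h_r)} \to 0$, which is the scalar conclusion. Note the theorem statement writes ``Wijsman $f$-statistical convergent'' in b), but in this lacunary section the intended conclusion is Wijsman lacunary $f$-statistical convergent ($WS_\theta^f$), and that is what the argument delivers.

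\textbf{The main obstacle} is handling the interplay between the two quantifiers on $\varepsilon$ in part b): the threshold $\varepsilon$ defining the exceptional set and the auxiliary scale $\varepsilon'$ coming from $\theta$-compatibility must be kept logically separate, and one must verify that ``$N_r \le \varepsilon' h_r$ eventually'' genuinely follows from lacunary statistical convergence \emph{for each fixed} $\varepsilon$ before $\varepsilon'$ is chosen. This is the place where the proof can silently fail if the order of choosing $\eta$, $\varepsilon'$, and the threshold on $r$ is mishandled; everything else is a faithful transcription of the scalar results of \cite{jia1}, adapted from ordinary density to $(\theta,f)$-density, followed by the trivial quantification over $x \in X$.
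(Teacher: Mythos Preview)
Your proposal is correct and follows the same route as the paper: reduce each part to the scalar statement about $(d(x,A_k))_k$ and then invoke the corresponding one-dimensional fact. The only difference is cosmetic: the paper cites the scalar results externally (part a) is Theorem~3.5 in \cite{wij2}; part b) is Theorem~3.2 in \cite{racsam}), whereas you unpack those proofs directly. Your contrapositive argument for a) is the standard one---note that the step ``$\frac{f(\delta h_r)}{f(h_r)}$ stays bounded away from $0$'' relies on subadditivity via $f(h_r)\le \lceil 1/\delta\rceil\, f(\delta h_r)$, which you allude to but should state explicitly---and your $\eta$--$\varepsilon'$ argument for b) is exactly the intended use of $\theta$-compatibility. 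Your observation that the conclusion of b) should read Wijsman \emph{lacunary} $f$-statistical convergence (i.e.\ membership in $WS_\theta^f$) is also correct; the paper's own proof confirms this intended meaning.
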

\begin{proof}
Indeed, part a) is exactly  Theorem 3.5 in \cite{wij2}.

Let us shown part b). If $(A_n)_n$ is Wijsman lacunary statistical convergent to $A$ then for any $x\in X$ the sequence $(d(x,A_n))_n$ is $\theta$-lacunary statistical convergent to $d(x,A)$. Hence, since $f$ is $\theta$-compatible, according to Theorem 3.2 in \cite{racsam} we get that $(d(x,A_n))_n$ is $\theta$-lacunary $f$-statistical convergent to  $d(x,A)$, therefore $(A_n)$ is Wijsman $\theta$-lacunary $f$-statistical convergent to $A$ as desired.
\end{proof}

\begin{theorem}
Let $(X,d)$ be a metric space and $(A_k)_k\subset CL(X)$, $A\in CL(X)$.  Assume that $\theta=(k_t)_t$ is a lacunary sequence.
\begin{description}
\item[a)] For any modulus function $f$, if $(A_n)_n$ is Wijsman $\theta$-lacunary $f$-strong Cesàro convergent to $A$ then $(A_n)_n$ also converges Wijsman $\theta$-lacunary statistical convergent to $A$.
\item[b)]  If $f$ is $\theta$-compatible and $(A_n)_n$ is Wijsman lacunary strong Cesàro convergent to $A$ then $(A_n)_n$ is Wijsman $\theta$-lacunary $f$-strong Cesàro convergent to $A$.
\end{description}
\end{theorem}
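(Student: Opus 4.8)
The plan is to follow the same reduction used in Theorems \ref{th1compatible}--\ref{th2compatible}: both statements concern Wijsman convergence, which by definition means that for every fixed $x\in X$ the scalar sequence $(d(x,A_n))_n$ enjoys the corresponding lacunary convergence towards $L=d(x,A)$. Hence it suffices to establish the two implications at the level of a single nonnegative scalar sequence $(x_k)_k$ with limit $L$, and then quantify over $x\in X$. In this way both parts become the lacunary analogues of the scalar facts already exploited through \cite{jia1,racsam}: that $f$-strong lacunary Cesàro convergence forces lacunary $f$-statistical convergence (part a), and that under $\theta$-compatibility lacunary strong Cesàro convergence is upgraded to lacunary $f$-strong Cesàro convergence (part b).

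For part a), fix $x\in X$, write $S_r=\sum_{k\in I_r}|x_k-L|$ and, for a given $\varepsilon>0$, $N_r=\#\{k\in I_r:|x_k-L|>\varepsilon\}$. Since every index counted by $N_r$ contributes more than $\varepsilon$ to the nonnegative sum $S_r$, one has $\varepsilon N_r\le S_r$. The only point to watch is that we must bound $f(N_r)$ and not $f(\varepsilon N_r)$: setting $m_\varepsilon=\lfloor 1/\varepsilon\rfloor+1$, monotonicity of $f$ gives $f(N_r)\le f(m_\varepsilon\,\varepsilon N_r)$, and subadditivity applied to the integer factor $m_\varepsilon$ gives $f(m_\varepsilon\,\varepsilon N_r)\le m_\varepsilon f(\varepsilon N_r)\le m_\varepsilon f(S_r)$. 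Dividing by $f(h_r)$,
\[
\frac{1}{f(h_r)}f(N_r)\le m_\varepsilon\,\frac{f(S_r)}{f(h_r)}\xrightarrow[r\to\infty]{}0,
\]
because the right-hand factor tends to $0$ by hypothesis and $m_\varepsilon$ is a constant independent of $r$. As $\varepsilon>0$ was arbitrary, $(x_k)_k$ is lacunary $f$-statistically convergent to $L$; no compatibility assumption on $f$ is needed here.

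For part b), again fix $x\in X$ and set $S_r=\sum_{k\in I_r}|x_k-L|$. The hypothesis of Wijsman lacunary strong Cesàro convergence means $S_r/h_r\to 0$. Given $\delta>0$, choose $R$ so that $S_r\le \delta h_r$ for all $r\ge R$; then monotonicity of $f$ yields $f(S_r)\le f(\delta h_r)$ for $r\ge R$, whence
\[
\limsup_{r\to\infty}\frac{f(S_r)}{f(h_r)}\le \limsup_{r\to\infty}\frac{f(\delta h_r)}{f(h_r)}=\varphi_\theta(\delta).
\]
Letting $\delta\to 0$ and invoking the $\theta$-compatibility of $f$ (Definition \ref{thetacompatible}) forces the left-hand upper limit to vanish, i.e.\ $f(S_r)/f(h_r)\to 0$, which is lacunary $f$-strong Cesàro convergence of $(x_k)_k$ to $L$. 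Quantifying over $x\in X$ delivers the Wijsman statement.

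The main obstacle is conceptual rather than computational and lives in part b): the eventual pointwise bound $S_r\le\delta h_r$ must be transferred to the quantity $\varphi_\theta(\delta)$, which is itself an upper limit in $t$, and it is precisely the definition of $\theta$-compatibility that lets the two upper limits be compared and then driven to zero as $\delta\to0$. In part a) the only delicate point, already isolated above, is replacing $f(\varepsilon N_r)$ by a controlled multiple of $f(N_r)$ through subadditivity so that the constant $m_\varepsilon$ does not depend on $r$.
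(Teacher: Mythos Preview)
Your argument is correct and follows the paper's overall strategy: reduce the Wijsman statements to the corresponding scalar facts about $(d(x,A_n))_n$ and then quantify over $x\in X$. The paper merely cites the scalar implications from \cite{racsam}, whereas you supply them explicitly; for part a) you even obtain the stronger conclusion of lacunary $f$-statistical convergence (which implies lacunary statistical convergence by the preceding theorem), while the paper's own proof passes instead through lacunary strong Ces\`aro convergence before reaching the stated conclusion. Your part b) is exactly the direct $\limsup$/$\varphi_\theta$ argument underlying the cited result in \cite{racsam}.
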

\begin{proof}
To show $a)$, assume that $(A_n)_n$ is Wijsman $\theta$-lacunary $f$-strong Cesàro convergent to $A$, then for any $x\in X$ we know that $(d(x,A_n))_n$ is $\theta$-lacunary $f$-strong Cesàro convergent to $d(x,A)$. Therefore, according to Theorem in \cite{racsam}, $(d(x,A_n))_n$ is $\theta$-lacunary strong Cesàro convergent to $d(x,A)$ which yields the result.

On the other hand if $(A_n)_n$ is Wijsman lacunary strong Cesàro convergent to $A$ then for any $x\in X$, the sequence 
$(d(x,A_n))_n$ is $\theta$-lacunary strong Cesàro convergent to $d(x,A)$, but since $f$ is $\theta$-compatible we get that
$(d(x,A_n))_n$ is $\theta$-lacunary $f$-strong Cesàro convergent to $d(x,A)$ which is what we are looking for.
\end{proof}

\begin{theorem}
\label{reciprocolacunary}
\begin{description}
\item[a)] Assume that $WS_\theta^f=WS_\theta$ then $f$ is $\theta$-compatible.

\item[b)] Assume that $WN_\theta^f=WN_\theta$ then $f$ is $\theta$-compatible.
\end{description}
\end{theorem}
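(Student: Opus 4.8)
The plan is to prove both statements by contraposition, adapting the counterexample construction of Theorem \ref{converse1} and Theorem \ref{converse2} to the lacunary setting. The situation is in fact cleaner here, because the $(\theta,f)$-density is computed interval by interval, so the delicate global bookkeeping encoded in the inductive inequality (\ref{desigualdad}) is not needed. So I would assume that $f$ is \emph{not} $\theta$-compatible and build a single sequence of sets witnessing $WS_\theta^f\neq WS_\theta$ (for a) and $WN_\theta^f\neq WN_\theta$ (for b). Since $\varphi_\theta$ is increasing in $\varepsilon$ (if $\varepsilon_1<\varepsilon_2$ then $f(h_t\varepsilon_1)\leq f(h_t\varepsilon_2)$ for each $t$, and $\limsup$ preserves this) and $\lim_{\varepsilon\to 0}\varphi_\theta(\varepsilon)\neq 0$ by Definition \ref{thetacompatible}, there is $c>0$ with $\varphi_\theta(\varepsilon)=\limsup_t \frac{f(h_t\varepsilon)}{f(h_t)}>c$ for every $\varepsilon>0$.

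Next I would fix a decreasing sequence $\varepsilon_t\to 0$ and construct the set $A$. Since $\varphi_\theta(\varepsilon_t)>c$, there are infinitely many indices $s$ with $f(h_s\varepsilon_t)\geq c\,f(h_s)$; so I select inductively $r_1<r_2<\cdots$ with $f(h_{r_t}\varepsilon_t)\geq c\,f(h_{r_t})$. In the interval $I_{r_t}=(k_{r_t-1},k_{r_t}]$ I place the $p_t=\lfloor h_{r_t}\varepsilon_t\rfloor+1$ smallest integers, and I put no point of $A$ in any interval $I_r$ with $r\notin\{r_t\}$; that is, $A=\bigcup_t\{k_{r_t-1}+1,\dots,k_{r_t-1}+p_t\}$. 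Because $p_t\geq h_{r_t}\varepsilon_t$ and $f$ is increasing,
$$
\frac{f(p_t)}{f(h_{r_t})}\geq\frac{f(h_{r_t}\varepsilon_t)}{f(h_{r_t})}\geq c,
\qquad\text{while}\qquad
\frac{p_t}{h_{r_t}}\leq \varepsilon_t+\frac{1}{h_{r_t}}\to 0 .
$$
As in Theorems \ref{converse1}--\ref{converse2}, I take the complex plane with the Euclidean metric and set $B_k=\{1\}$ if $k\in A$ and $B_k=\{0\}$ otherwise, with $B=\{0\}$.

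For part a), testing with $x_0=0$ and $\varepsilon=1/2$ gives, along the subsequence $r=r_t$,
$$
\frac{f\bigl(\#\{k\in I_{r_t}\,:\,|d(0,B_k)-d(0,B)|>1/2\}\bigr)}{f(h_{r_t})}=\frac{f(p_t)}{f(h_{r_t})}\geq c,
$$
so the $(\theta,f)$-density of the exceptional set is not $0$ and $B_k\overset{WS_\theta^f}{\nrightarrow}B$. On the other hand, for arbitrary $x\in\mathbb{C}$ the quantity $|d(x,B_k)-d(x,B)|$ equals the constant $\gamma_x=\bigl||x-1|-|x|\bigr|$ on $A$ and $0$ off $A$, so for every $\varepsilon>0$ the count $\#\{k\in I_r:|d(x,B_k)-d(x,B)|>\varepsilon\}$ is at most $\#(A\cap I_r)$; since this is $0$ for $r\notin\{r_t\}$ and $\leq p_t$ with $p_t/h_{r_t}\to 0$ otherwise, one has $\#(A\cap I_r)/h_r\to 0$ and hence $B_k\overset{WS_\theta}{\rightarrow}B$. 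This forces $WS_\theta^f\neq WS_\theta$, proving a) by contraposition.

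For part b) I keep the same sequence $(B_k)_k$. As the nonzero values of $|d(0,B_k)-d(0,B)|$ all equal $1$, one has $\sum_{k\in I_{r_t}}|d(0,B_k)-d(0,B)|=\#(A\cap I_{r_t})=p_t$, so the identical estimate yields $f(p_t)/f(h_{r_t})\geq c$ and $B_k\overset{WN_\theta^f}{\nrightarrow}B$. Conversely, for each $x\in\mathbb{C}$ one computes $\frac{1}{h_r}\sum_{k\in I_r}|d(x,B_k)-d(x,B)|=\gamma_x\,\#(A\cap I_r)/h_r\to 0$, so $B_k\overset{WN_\theta}{\rightarrow}B$; equivalently, since $(B_k)_k$ is Wijsman lacunary statistically convergent by a) and Wijsman bounded, it is Wijsman lacunary strong Cesàro convergent. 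Hence $WN_\theta^f\neq WN_\theta$, proving b). The only point requiring care, which I would verify explicitly, is the interval-localized bookkeeping: that $p_t\leq h_{r_t}$ so the points fit in $I_{r_t}$ (immediate since $\varepsilon_t<1$ and $h_{r_t}\to\infty$), and that placing points only along the sparse subsequence $\{r_t\}$ does not spoil the limit $\#(A\cap I_r)/h_r\to 0$ over \emph{all} $r$ (immediate, as empty intervals contribute $0$). This is precisely why the lacunary argument bypasses the inductive inequality (\ref{desigualdad}) of the non-lacunary proof.
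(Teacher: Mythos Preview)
Your proposal is correct and follows essentially the same route as the paper: both argue by contraposition, use the monotonicity of $\varphi_\theta$ to get a uniform lower bound $c$, pick a subsequence $(r_t)$ with $f(h_{r_t}\varepsilon_t)\geq c\,f(h_{r_t})$, plant $\lfloor h_{r_t}\varepsilon_t\rfloor+1$ points of $A$ inside $I_{r_t}$ (you at the left end, the paper at the right end---an immaterial difference), define the same $\{0\}/\{1\}$ sequence $(B_k)$, and verify that it lies in $WS_\theta\setminus WS_\theta^f$ and in $WN_\theta\setminus WN_\theta^f$. Your remark that the lacunary bookkeeping bypasses the inductive inequality (\ref{desigualdad}) is exactly right and matches the paper's simpler condition (\ref{lacudesigualdad}).
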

\begin{proof}
To show $a)$, let us assume that $f$ is not a $\theta$-compatible modulus function.
On the metric space $(\mathbb{R},|\cdot|)$ we will construct 
 a sequence $(B_n)_n\in WS_\theta\setminus WS_\theta^f$. An easy modification provides a counterexample on any metric space. 

Since $\varphi_{\theta}(\varepsilon)$ is an increasing function, if $f$ is not $\theta$-compatible then  there exists $c>0$ such that for any $\varepsilon>0$:  $\varphi_\theta(\varepsilon)>c$.

Let us fix $\varepsilon_k\to 0$. Thus, for each $k$ there exists $h_{r_k}$ such that $f(h_{r_k}\varepsilon_k)\geq cf(h_{r_k})$. Moreover, since $h_{r}$ is increasing we can suppose that 
\begin{equation}
    \label{lacudesigualdad}
    h_{r_k}(1-\varepsilon_k)-1>0.
\end{equation} 

Let us denote by $\lfloor x\rfloor$ the integer part of $x$.  We set $n_k=\lfloor h_{r_k}\varepsilon_k \rfloor+1$. According to equation (\ref{lacudesigualdad}), we get that $h_{r_k}-n_k>0$. Let us define the subset $A_k=[k_{r_k}-n_k,k_{r_k}]\cap \mathbb{N}\subset I_{r_k}$, and $A=\bigcup_kA_k$. 

Let us define
$$
B_k=\begin{cases}
\{1\} & k\in A\\
\{0\} & k\notin A
\end{cases}
$$
We claim that the sequence $(B_n)_n$, is Wijsman $\theta$-lacunary statistically convergent to $B=\{0\}$ but not Wijsman $\theta$-lacunary $f$-statistically convergent, a contradiction.

Let us observe that for any $x$, $d(x,B)=|x|$ and
$$
d(x,B_k)=\begin{cases}
|1-x| & k\in A\\
|x| & k\notin A.
\end{cases}
$$
Therefore, if $r\neq r_k$ for any $k$,   then
$$
 \frac{\#\{l\in I_{r}\,:\, |d(x,B_l)-d(x,B)|>\varepsilon\}}{h_{r}} \leq \frac{0}{h_{r}}=0.
$$
And for $r=r_k$:
$$
 \frac{\#\{l\in I_{r_k}\,:\, |d(x,B_l)-d(x,B)|>\varepsilon\}}{h_{r_k}}= \frac{n_k}{h_{r_k}}\to 0
$$
as $k\to \infty$. 
That is, $(B_k)_k$ is Wijsman $\theta$-lacunary statistical convergent to $B=\{0\}$.

On the other hand, 

$$
 \frac{f(\#\{l\in I_{r_k}\,:\, |d(x, B_l)-d(x,B)|>\varepsilon\})}{f(h_{r_k})}= \frac{f(n_k)}{f(h_{r_k})}\geq \frac{f(h_{r_k}\varepsilon_k)}{f(h_{r_k})}\geq c
$$
which yields the desired result.

For the part $b)$ an easy check show that the sequence $(B_k)$ constructed before belong to $WN_\theta\setminus WN_\theta^f$.
\end{proof}

Finally we wish to explore the connections between Wijsman lacunary $f$-statistical convergence and Wijsman lacunary $f$-strong Cesàro convergence. To this end, let us refine the the concept of sequence Wijsman Uniform integrable.

Let $\theta=(k_t)$ be a lacunary sequence. A sequence $(x_n)$ in a normed space $Y$ is said to be $\theta$-lacunary uniformly integrable if
$$
\lim_{M\to \infty} \sup_t \sum_{\substack{k\in I_t \\ \|x_k\|\geq M}}     \|x_k\|=0.
$$

\begin{definition}
Let $\theta=(k_t)$ be a lacunary sequence and $(X,d)$ a metric space. A sequence $(A_n)_n\subset CL(X)$ is said to be Wijsman $\theta$-lacunary uniformly integrable if for any $x\in X$ the sequence $(d(x,A_n))_n$ is  $\theta$-lacunary uniformly integrable. Let us denote by $WI_\theta$ the subsets of all Wijsman $\theta$-lacunary uniformly integrable sequences.
\end{definition}
Let us point out that if a sequence $(A_k)_k\subset CL(X)$ is Wijsman uniformly integrable then it is Wijsman $\theta$-lacunary uniformly integrable.

\begin{theorem}
\begin{description}
\item[a)]Assume that $\theta=(k_t)$ is a lacunary sequence and $f$ is a modulus function. Then $WN_\theta^f\subset WS_\theta^f$.

\item[b)] Assume that $\theta=(k_t)$ is a lacunary sequence and $f$ is a $\theta$-compatible modulus function. Then 
$WS_\theta^f\cap WI_\theta\subset WN_\theta^f$.
\end{description}
\end{theorem}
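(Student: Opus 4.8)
The plan is to reduce both inclusions to pointwise statements about the real sequences $(d(x,A_k))_k$, exactly as in the non-lacunary theorem. Indeed, a sequence of sets belongs to $WN_\theta^f$ (resp. $WS_\theta^f$, $WI_\theta$) precisely when, for every $x\in X$, the scalar sequence $d(x,A_k)$ is $\theta$-lacunary $f$-strong Cesàro convergent (resp. $\theta$-lacunary $f$-statistically convergent, $\theta$-lacunary uniformly integrable) to $d(x,A)$. So it suffices to prove two scalar implications for $z_k:=|d(x,A_k)-d(x,A)|$ and then quantify over $x$; these are the lacunary analogues of Theorems 3.2 and 3.4 of \cite{jia1}, established for scalar sequences in \cite{racsam}.

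For part a), fix $x$ and write $S_r=\sum_{k\in I_r}z_k$ and $N_r(\varepsilon)=\#\{k\in I_r:z_k>\varepsilon\}$. The elementary inequality $\varepsilon N_r(\varepsilon)\le S_r$ gives $N_r(\varepsilon)\le S_r/\varepsilon\le\lceil 1/\varepsilon\rceil S_r$, and since $f$ is increasing and subadditive, $f(N_r(\varepsilon))\le\lceil 1/\varepsilon\rceil f(S_r)$. Dividing by $f(h_r)$ and using that $f(S_r)/f(h_r)\to0$ (the $\theta$-lacunary $f$-strong Cesàro hypothesis) yields $f(N_r(\varepsilon))/f(h_r)\to0$, i.e. the required $\theta$-lacunary $f$-statistical convergence. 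No assumption on $f$ beyond the modulus axioms is needed here, so $WN_\theta^f\subset WS_\theta^f$ for any $f$.

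For part b), fix $x$, pick $M>\varepsilon>0$, and split $I_r$ into the ranges $z_k\le\varepsilon$, $\varepsilon<z_k<M$, and $z_k\ge M$. By subadditivity of $f$,
$$f(S_r)\le f\Big(\sum_{k\in I_r,\,z_k\le\varepsilon}z_k\Big)+f\Big(\sum_{k\in I_r,\,\varepsilon<z_k<M}z_k\Big)+f\Big(\sum_{k\in I_r,\,z_k\ge M}z_k\Big).$$
The first summand is at most $f(\varepsilon h_r)$, so its quotient by $f(h_r)$ has $\limsup_r$ bounded by $\varphi_\theta(\varepsilon)$; this is exactly where $\theta$-compatibility enters, since $\varphi_\theta(\varepsilon)\to0$ as $\varepsilon\to0$. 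The middle summand is at most $f(M\,N_r(\varepsilon))\le\lceil M\rceil f(N_r(\varepsilon))$, whose quotient by $f(h_r)$ tends to $0$ by the $\theta$-lacunary $f$-statistical convergence of $(z_k)$. The tail summand is controlled by $\theta$-lacunary uniform integrability: for fixed $M$ the tail $\sum_{k\in I_r,\,z_k\ge M}z_k$ is bounded uniformly in $r$ by a finite quantity $\eta(M)$, so $f$ of it is bounded in $r$ while $f(h_r)\to\infty$ (as $f$ is unbounded and $h_r\to\infty$), forcing that quotient to $0$.

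The hard part will be the bookkeeping of the three error terms and, above all, the order in which the limits are taken. I would first freeze $M$ and $\varepsilon$ and let $r\to\infty$ to annihilate the middle and tail contributions, obtaining $\limsup_r f(S_r)/f(h_r)\le\varphi_\theta(\varepsilon)$ for every $\varepsilon>0$; only then would I let $\varepsilon\to0$ and invoke $\theta$-compatibility to conclude $f(S_r)/f(h_r)\to0$. The delicate point is that $N_r(\varepsilon)$ depends on $\varepsilon$ and that the subadditive constants $\lceil M\rceil$ and $\lceil 1/\varepsilon\rceil$ must remain fixed while $r$ varies; the chosen limit order keeps every constant frozen at the stage where $r\to\infty$, so each piece genuinely vanishes. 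Quantifying the scalar conclusion over all $x\in X$ then gives $(A_k)_k\in WN_\theta^f$, completing the inclusion $WS_\theta^f\cap WI_\theta\subset WN_\theta^f$.
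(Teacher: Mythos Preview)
Your argument is correct and follows exactly the paper's strategy: reduce each Wijsman-level inclusion to the corresponding scalar statement for the real sequences $(d(x,A_k))_k$, and then appeal to the scalar lacunary results. The paper's own proof simply cites Theorems~4.1 and~4.3 of \cite{racsam} for those scalar implications, whereas you spell them out (the Markov-type inequality $f(N_r(\varepsilon))\le\lceil 1/\varepsilon\rceil f(S_r)$ for a), and the three-range decomposition for b)); the only cosmetic refinement needed is to choose $M$ large enough at the outset so that the lacunary uniform-integrability tail bound $\eta(M)$ is indeed finite.
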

\begin{proof}
If $(A_k)_k\subset CL(X)$ is Wijsman $\theta$-lacunary $f$-statistical convergent to $A$ then for any $x\in X$,
$(d(x,A_n))_n$ is $\theta$-lacunary $f$-strong Cesàro convergent to $d(x,A)$ therefore according to Theorem 4.1 in \cite{racsam} we get that $(d(x,A_n))_n$ is $\theta$-lacunary $f$-statistical convergent to $d(x,A)$ which means that $(A_n)_n$ is Wijsman $\theta$-lacunary $f$-statistical convergent to $A$.

To prove $b)$, let $(A_k)\in WS_\theta^f\cap WI_\theta$, Wijsman $\theta$-lacunary $f$-statistical convergent to $A$.
Then for any $x\in X$ the sequence $(d(x,A_n))_n$ is $\theta$-lacunary $f$-statistical convergent to $d(x,A)$. Additionally $(d(x,A_n))_n$ is uniformly integrable. Therefore, since $f$ is $\theta$-compatible according to Theorem 4.3 in \cite{racsman} we get that $(d(x,A_n))_n$ is $\theta$-lacunary $f$-strong Cesàro convergent to $d(x,A)$, that is $(A_n)_n\in  WN_\theta^f$ as desired.
\end{proof}

Finally let us shown that $\theta$-compatibility is  again necessary for such phenomena.
\begin{theorem}
Let $\theta=(k_t)_t$ be a lacunary sequence.
Assume that for some modulus function $f$, $WS_\theta^f\cap WI_\theta\subset WN_\theta^f$ then $f$ is $\theta$-compatible.
\end{theorem}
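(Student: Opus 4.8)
The plan is to argue by contraposition: assuming $f$ is \emph{not} $\theta$-compatible, I will exhibit a sequence $(B_l)_l\subset CL(\mathbb{R})$ lying in $WS_\theta^f\cap WI_\theta$ but not in $WN_\theta^f$, contradicting the hypothesised inclusion. Since $f$ is not $\theta$-compatible and $\varphi_\theta$ is increasing, there is $c>0$ with $\varphi_\theta(\varepsilon)>c$ for every $\varepsilon>0$. Exactly as in the proof of Theorem \ref{reciprocolacunary}, I fix $\varepsilon_k\to 0$ and extract an increasing sequence of indices $r_1<r_2<\cdots$ with
\[
f(h_{r_k}\varepsilon_k)\geq c\,f(h_{r_k}),\qquad k\in\mathbb{N}.
\]
I reuse precisely this data $(\varepsilon_k)_k$ and $(r_k)_k$; only the values attached to the sets change.

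The construction I propose is the following. On $(\mathbb{R},|\cdot|)$, set
\[
B_l=\begin{cases}\{\varepsilon_k\} & l\in I_{r_k}\text{ for some }k,\\ \{0\} & \text{otherwise,}\end{cases}
\]
with candidate limit $B=\{0\}$; note the blocks $I_{r_k}$ are pairwise disjoint, so this is well defined. The guiding idea is to place a \emph{small} constant $\varepsilon_k$ on the \emph{whole} block $I_{r_k}$: smallness ($\varepsilon_k\to 0$) forces the sequence into $WS_\theta^f\cap WI_\theta$, while filling the entire block makes the block-sum equal to $h_{r_k}\varepsilon_k$, the very quantity that non-$\theta$-compatibility inflates.

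I then verify the three memberships. For $x=0$ one has $d(0,B_l)=\varepsilon_k$ on $I_{r_k}$ and $0$ elsewhere; for a fixed threshold $\varepsilon>0$ the set $\{l\in I_r:d(0,B_l)>\varepsilon\}$ is empty unless $r=r_k$ with $\varepsilon_k>\varepsilon$, which, since $\varepsilon_k\to 0$, occurs for only finitely many $r$. Hence the counting numerator vanishes for all large $r$ and the $(\theta,f)$-density is $0$, giving membership in $WS_\theta^f$; the reverse triangle inequality $\bigl||x-\varepsilon_k|-|x|\bigr|\leq\varepsilon_k$ transfers this to every $x\in\mathbb{R}$. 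Since all distances are bounded by $|x|+\varepsilon_1$, the sequence is Wijsman bounded, hence Wijsman uniformly integrable, and by the remark preceding the theorem it lies in $WI_\theta$. Finally, at $x=0$,
\[
\frac{f\left(\sum_{l\in I_{r_k}}|d(0,B_l)-d(0,B)|\right)}{f(h_{r_k})}=\frac{f(h_{r_k}\varepsilon_k)}{f(h_{r_k})}\geq c
\]
for every $k$, so $(d(0,B_l))_l$ is not $\theta$-lacunary $f$-strong Cesàro convergent and $(B_l)_l\notin WN_\theta^f$. This contradiction forces $f$ to be $\theta$-compatible.

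The main obstacle, and the point where the argument departs from Theorem \ref{reciprocolacunary}, is the tension between the two requirements: to keep the sequence in $WS_\theta^f$ (and $WI_\theta$) the active distances must tend to $0$, yet to destroy membership in $WN_\theta^f$ the block-sum over $I_{r_k}$ must remain comparable to $h_{r_k}\varepsilon_k$. In Theorem \ref{reciprocolacunary} this was achieved by placing the value $1$ on a thin sub-block of size $\approx h_{r_k}\varepsilon_k$, but that height does not vanish and so fails the $f$-statistical test. The resolution is to trade height for width: spreading the small height $\varepsilon_k$ across the full width $h_{r_k}$ reproduces the product $h_{r_k}\varepsilon_k$ while letting the individual values decay. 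The only delicate bookkeeping is checking that the finitely many ``tall'' early blocks do not spoil the limit defining the $(\theta,f)$-density, and this is immediate once $\varepsilon$ is fixed before letting $r\to\infty$.
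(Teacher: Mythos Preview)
Your argument is correct and follows essentially the same route as the paper: the same contrapositive setup, the same construction $B_l=\{\varepsilon_k\}$ on the whole block $I_{r_k}$ and $\{0\}$ elsewhere with limit $B=\{0\}$, and the same verification at $x=0$ that the block-sum equals $h_{r_k}\varepsilon_k$. Your write-up is in fact more detailed than the paper's (which simply asserts $WS_\theta^f$ and $WI_\theta$ membership), and your ``height-for-width'' remark nicely articulates why this construction differs from that of Theorem~\ref{reciprocolacunary}.
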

\begin{proof}
Let us assume that $f$ is not a $\theta$-compatible modulus function.
On the metric space $(\mathbb{R},|\cdot|)$ we will construct 
 a sequence $(B_n)_n\in WS_\theta^f\cap WI_\theta\setminus WN_\theta^f$. An easy modification provides a counterexample on any metric space. 
 
 Since $f$ is not $\theta$-compatible given $(\varepsilon_k)$ a decreasing sequence converging to zero, there exists a subsequence $r_k$ such that $f(h_{r_k}\varepsilon_k)\geq cf(h_{r_k})$, for some $c>0$.
 
 Let us consider the following subsets
 $$
 B_l=\begin{cases}
\{\varepsilon_k\} & l\in (k_{r_k-1},k_{r_k}]\\
\{0\} & \textrm{other case}
 \end{cases}
 $$
 Since $(B_l)_l$ is bounded, $(B_l)_l\in WI_\theta$. Since $(\varepsilon_k)_k$ is decreasing $(B_l)_l\in WS_\theta^f$.
 
 However, if we take $x=0$ and $B=\{0\}$ we get that $B_k\overset{WN_\theta^f}{\nrightarrow } B$. Indeed
 
 $$
\frac{1}{f(h_{r_k})}f\left(\sum_{n\in I_{r_k}} |d(x,B_n)|\right)=\frac{f(h_{r_{k}}\varepsilon_k)}{f(h_{r_k})}\geq c
$$
 which gives the desired result.
\end{proof}
\section*{Conclusions}
The notion of statistical convergence modulated by an unbounded modulus function, allow us to define convergences of sets, more restrictive than the Wijsman statistical convergence defined through the classical statistical convergence. We conclude that in order to obtain new, more restrictive data filtering methods, it is necessary to consider non-compatible modulus functions, because for compatible modulus functions both convergence methods: Wijsman statistical convergence and Wijsman $f$-statistical convergence,  are equivalents. Such structure remain stable if we consider the lacunary version of these convergence methods.

\section*{Availability of data and material}
Not applicable.

\section*{Acknowledgements}
We want to thank to the Vicerretorado de Investigación of University of Cádiz who supported  partially this research.

\section*{Competing interests}
The author declare that they have no competing interests.

\section*{Funding}

 The author was supported by Ministerio de Ciencia, Innovaci\'on y Universidades under grant PGC2018-101514-B-I00.

\end{document}